\newtheorem{theorem}{Theorem}[section]
\newtheorem{corollary}[theorem]{Corollary}
\newtheorem{lemma}[theorem]{Lemma}
\newtheorem{proposition}[theorem]{Proposition}
\newtheorem*{theorem*}{Theorem}
\theoremstyle{definition}
\newtheorem{definition}[theorem]{Definition}
\newtheorem{example}[theorem]{Example}
\newtheorem{remark}[theorem]{Remark}
\begin{document}
\title{Toric non-Gorenstein loci}
%

%
\author{Kemal Rose}
%
%


%
\maketitle              
\begin{abstract}
We describe the non-Gorenstein loci of normal toric varieties.
In the case of Hibi rings, a combinatorial
description`
is provided in terms of the underlying partially ordered set.
As a non-toric application we compute the dimensions of the non-Gorenstein loci of the first
secant variety of Segre varieties.
\end{abstract}

\section{Introduction}

In this paper we study a local measure for singularity which
is based on a natural generalization of smoothness, the Gorenstein property.

The combinatorial properties of a polyhedral cone are deeply linked to the
singularities of the associated toric variety.
There are several results in this direction based on the following
theorem proved in [HHS19]:
the non-Gorenstein locus of a graded ring $R$ with canonical module $\omega$ is the the set of
prime ideals containing the trace ideal
\[
\operatorname{tr}(\omega) = \sum_{\varphi \in \operatorname{Hom}_R(\omega, R) } \varphi(\omega). 
\]
This makes the ideal $\operatorname{tr}(\omega)$ a measure for singularity.

The aim of this paper is to provide a clear relation between the combinatorics 
of the polyhedral cone and the geometry of the toric variety.
In particular, Theorem \ref{thm: charac} in Chapter 2 describes the non-Gorenstein loci of
toric varieties.
This is a vast generalization of results in \cite{Measuring} (Theorem 4.9), where
Herzog, Mohammadi and Page give criteria for certain simplicial toric varieties to be
Gorenstein on the punctured spectrum.

As an application of the toric framework
developed in section 2, we proceed with an investigation of
non-Gorenstein loci of Hibi rings in section 3 and 4, and
as a non-toric application we study secants of Segre varieties in the last section. 
The separate treatment of the toric case enables us to give particularly
conceptual results and comparatively short and clear proofs.

A Hibi ring $k[P]$
is a toric algebra associated
to a finite, partially ordered set $P$ whose combinatorial properties
determine the geometry of the associated toric variety $\operatorname{Spec}(k[P])$.
They were introduced 1987 
by Takayuki Hibi in \cite{Hibi_rings}. Initially, Hibi rings were studied because of their 
appearance as flat degenerations of the
coordinate rings of Grassmannians and, more generally, flag varieties.
For a construction of these deformations consider \cite{Hodge}, and for a more modern construction using Sagbi bases consider \cite{Hibi_as_degen}. 

Nowadays Hibi rings are objects of interest in their own, and appear in various geometric,
algebraic and combinatorial contexts. Consider for example \cite{Alexdual}, \cite{Fatemeh}, \cite{Howe2005WeylCA}, \cite{Sangjib}.
The non-Gorenstein loci of Hibi rings have been subject to extensive study:
a Hibi ring
$k[P]$
is Gorenstein if and only if $P$ is a pure poset, that is, all maximal chains of $P$ have the
same length ([Hib87], Corollary 3.d).
In \cite{Measuring} it is shown that the non-Gorenstein locus is zero-dimensional if and only if each connected component of $P$ is pure.
In an attempt to find a good regularity condition weaker than the Gorenstein property, but stronger than the Cohen-Macaulay property, in \cite{Herzog}, Herzog, Hibi, and Stamate define $k[P]$ to be nearly Gorenstein if
$\operatorname{tr}(\omega)$ is the graded maximal ideal.
They prove that $k[P]$ is almost Gorenstein if and only if
all connected components $P_i$ of $ P$ are pure, and the difference in their ranks is bounded by one.
In 
\cite{Gor_loci_Janet} (Theorem 4.5), Miyazaki and Page give a description of the radical of $\operatorname{tr}(\omega)$
in combinatorial terms.
By proving Theorem \ref{theorem: max_comp} and Theorem \ref{theorem: graded_fibres}, we
give a new characterization of $\sqrt{\operatorname{tr}(\omega)}$ in a more conceptual, simple, and natural manner that unifies the above results.
Ultimately, we measure the deviation of $\operatorname{Spec}(k[P])$ from being Gorenstein, by how much $P$ deviates from being graded, in a precise sense, relating a geometric and a combinatorial notion of irregularity. These results take advantage of the discussion in section 2.

In Chapter five we proceed with an application of
Theorem \ref{thm: charac} to the non-toric case:
the first secant variety of the Segre variety
\[\mathbb{P}(V_1) \times \cdots  \times \mathbb{P}(V_n)  \xhookrightarrow{} \mathbb{P}(V_1 \otimes \cdots \otimes V_n) \]
is the closure of all tensors of rank $\leq 2$.
Secants of Segre varieties are objects in classical algebraic geometry that
are connected to the border rank of tensors, and to the computational complexity of matrix multiplication.
In \cite{Mateusz} the first secant variety, and in particular its singularities, are studied
using methods from statistics.
The most important tool are
secant-cumulant coordinates.
They are a special case of $L$-cumulants introduced in \cite{L-cumul}, and the induced coordinate change identifies affine, open patches
of the secant variety with certain toric varieties.
We extend the results of \cite{Mateusz} in the last chapter, the main result being Theorem \ref{thm: dim_Secants}.

\section{The toric trace ideal}

Our notation is close to \cite{cox2011toric}, which is the source that we refer to for information about toric varieties.

Let $N$ be a free abelian group of rank $n$ with dual $M = \operatorname{Hom}(N, \mathbb{Z})$,
and let
$\sigma \subseteq N_\mathbb{R} = N \otimes \mathbb{R}$ be a rational, pointed cone.
We denote its dual cone
$\sigma^\vee = \{l \in M_\mathbb{R}| \ l(\sigma) \subseteq \mathbb{R}_{\geq 0} \}$
and consider the normal toric ring $R = k[\sigma^\vee \cap M]$, where $k$ is an algebraically closed field in characteristic 0.

To a ray $\rho$ of $\sigma$ with primitive ray generator $u_\rho \in N$, we associate the torus-invariant prime-divisor
$D_\rho$. It is defined by the ideal
$I_ \rho$, spanned by all monomials away from the facet defined by $\rho$:
    \[
        I_\rho := \langle \chi^m | \  m \in \sigma^\vee \cap M, \  \langle u_\rho, m \rangle > 0 \rangle.
    \]
The negative sum $K = \sum_{ \rho  } - D_\rho$ of all torus-invariant, prime Weyl-divisors
is referred to as the canonical divisor of the toric variety $X = \operatorname{Spec}(R)$.
The module $\omega = \Gamma(\mathcal{O}_X(K)) $ is called the canonical module of $R$.
In \cite{Herzog} the non-Gorenstein locus of $X$ is described as the vanishing locus of its trace ideal:
\begin{definition}
The trace ideal of a module $M$ of $R$ is generated by the images of all $R$-module morphism $\varphi: M \longrightarrow R$.
\[
\operatorname{tr}(M) := \sum_{\varphi \in \operatorname{Hom}_R(M, R) } \varphi(M).
\]
\end{definition}

\begin{lemma} [\cite{Herzog}, Lemma 2.1]
\label{lemma:Herzog}
For every prime $\mathfrak{p} \in$ Spec$(R)$, $R_{\mathfrak{p}}$ is not Gorenstein if and only if    
$\operatorname{tr}(\omega) \subseteq \mathfrak{p}$.
\end{lemma}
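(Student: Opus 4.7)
The plan is to reduce both sides of the equivalence to a statement about the canonical module of $R_{\mathfrak{p}}$, using the fact that for a Cohen--Macaulay local ring, the Gorenstein property is equivalent to the canonical module being free of rank one. Since $R$ is a normal toric ring it is Cohen--Macaulay (Hochster's theorem), so $R_{\mathfrak{p}}$ is Cohen--Macaulay and $\omega_{\mathfrak{p}}$ is a canonical module of $R_{\mathfrak{p}}$; hence $R_{\mathfrak{p}}$ is Gorenstein if and only if $\omega_{\mathfrak{p}} \cong R_{\mathfrak{p}}$.

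The first step I would carry out is to show that the formation of the trace ideal commutes with localization, i.e.\ $\operatorname{tr}(\omega)_{\mathfrak{p}} = \operatorname{tr}(\omega_{\mathfrak{p}})$ as ideals of $R_{\mathfrak{p}}$. Because $\omega$ is finitely generated over the Noetherian ring $R$, $\operatorname{Hom}_R(\omega, -)$ commutes with localization, and sums of images commute with localization as well. Consequently, $\operatorname{tr}(\omega) \not\subseteq \mathfrak{p}$ is equivalent to $\operatorname{tr}(\omega_{\mathfrak{p}}) = R_{\mathfrak{p}}$, and it suffices to prove the local statement: over the local ring $R_{\mathfrak{p}}$, the trace of $\omega_{\mathfrak{p}}$ is the whole ring if and only if $\omega_{\mathfrak{p}} \cong R_{\mathfrak{p}}$.

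The core of the argument is what I will call the \emph{summand principle}: for a finitely generated module $M$ over a Noetherian local ring $(S,\mathfrak{n})$, $\operatorname{tr}(M) = S$ if and only if $S$ is a direct summand of $M$. One direction is obvious from the projection. For the converse, $\operatorname{tr}(M) = S$ means some $\varphi \in \operatorname{Hom}_S(M, S)$ has a unit in its image, by considering finitely many generators and applying the fact that a sum of non-units lies in $\mathfrak{n}$; then $\varphi$ is surjective, and a surjection onto the free module $S$ splits. Applying this to $M = \omega_{\mathfrak{p}}$ yields a decomposition $\omega_{\mathfrak{p}} = R_{\mathfrak{p}} \oplus N$, and since $\omega$ is a divisorial fractional ideal on the toric domain $R$ it is torsion-free of rank one, forcing $N = 0$ by a rank count. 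Combining with the Gorenstein criterion above finishes the proof.

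The main obstacle is the summand principle, which is the only step that is not a formal manipulation; it requires the local-ring observation that the sum of images of finitely many module homomorphisms generates the unit ideal only if one of them already does. The rest is bookkeeping: localization compatibility of $\operatorname{Hom}$ and trace, the Cohen--Macaulay property of $R$ in the toric setting, and the rank-one rigidity of $\omega$.
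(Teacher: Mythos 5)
Your proposal is correct. Note that the paper itself offers no proof of this lemma --- it is quoted verbatim from \cite{Herzog} (Lemma 2.1) --- and your argument is essentially a faithful reconstruction of the proof given there: localize the trace ideal, reduce to the statement that $\operatorname{tr}(\omega_{\mathfrak p}) = R_{\mathfrak p}$ iff $R_{\mathfrak p}$ is a free summand of $\omega_{\mathfrak p}$, and then rigidify. The one place you diverge is the last step: \cite{Herzog} works with arbitrary Cohen--Macaulay rings and kills the complement $N$ via indecomposability of the canonical module (using $\operatorname{End}_{R_{\mathfrak p}}(\omega_{\mathfrak p}) \cong R_{\mathfrak p}$ and the absence of nontrivial idempotents in a local ring), whereas you use that $\omega$ is a rank-one torsion-free divisorial ideal; this is weaker in generality but perfectly valid here, since $R$ is a normal toric domain.
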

In other words,
the non-Gorenstein locus of $X$ is the vanishing locus of
the radical $\sqrt{\operatorname{tr}(\omega)}$. 
In the remainder of this section we determine the minimal primes lying over $\operatorname{tr}(\omega)$, i.e. the irreducible components of the
non-Gorenstein locus.

The canonical module $\omega$ is spanned by all monomials
in the polyhedron 
\[
P_K = \{  x \in M_\mathbb{R} | \ \forall \rho   : \langle u_\rho, x \rangle \geq 1  \},
\]
which is obtained by translating all facets of $\sigma^\vee$ into the interior by one lattice-length.
Similarly, its dual $\omega^\vee= \operatorname{Hom}_R(\omega, R)$ is the module generated by the monomials in
the polyhedron
\[
P_{-K} = \{  x \in M_\mathbb{R} | \ \forall \rho  : \langle u_\rho, x \rangle \geq -1  \}.
\]
We now proceed with a polyhedral description of the homogeneous ideal tr($\omega$).
\begin{proposition}
\label{prop:gens}
The trace ideal $\operatorname{tr}(\omega)$ is generated by all monomials
$
\chi^{m + m'} , $ where $  m \in P_K \cap M,$ and $ m'\in P_{-K} \cap M$. In particular, tr$(\omega)$ is $M-$graded.
\end{proposition}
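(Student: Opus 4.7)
The plan is to leverage the monomial description of $\omega^\vee = \operatorname{Hom}_R(\omega,R)$ established in the paragraph preceding the proposition: $\omega^\vee$ is $k$-spanned by the monomials $\chi^{m'}$ with $m'\in P_{-K}\cap M$. Since $\operatorname{tr}(\omega)$ is by definition the sum of images of all such homomorphisms, and each monomial element of $\omega^\vee$ acts on $\omega \subseteq K(R)$ by multiplication, the trace ideal will come out as the product of the fractional ideals $\omega$ and $\omega^\vee$, from which the monomial generators can be read off directly.

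First I would observe that, because $\omega^\vee$ has a $k$-basis consisting of the monomials $\chi^{m'}$ with $m'\in P_{-K}\cap M$, every $R$-linear map $\varphi\colon \omega \to R$ is a finite $k$-linear combination of the multiplication maps $\chi^{m'}\colon \omega \to R$. Consequently
\[
\operatorname{tr}(\omega) \;=\; \sum_{\varphi \in \operatorname{Hom}_R(\omega,R)}\varphi(\omega) \;=\; \sum_{m' \in P_{-K}\cap M}\chi^{m'}\cdot \omega.
\]
Since $\omega$ itself is the $k$-span of the monomials $\chi^m$ with $m\in P_K\cap M$, multiplying by $\chi^{m'}$ gives the submodule generated by $\chi^{m+m'}$ for $m$ ranging over $P_K\cap M$, and summing over $m'$ yields
\[
\operatorname{tr}(\omega) \;=\; \bigl\langle \chi^{m+m'} \,\bigm|\, m \in P_K\cap M,\ m' \in P_{-K}\cap M \bigr\rangle.
\]
Each such monomial actually lies in $R$: for every ray $\rho$ one has $\langle u_\rho, m+m'\rangle \geq 1 + (-1) = 0$, so $m+m' \in \sigma^\vee \cap M$. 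Because the generating set consists of monomials, the ideal is automatically $M$-graded, giving the second assertion for free.

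The main obstacle I expect is the passage from arbitrary $R$-linear maps $\omega \to R$ to $k$-linear combinations of monomial multiplications; this is exactly where the polyhedral description of $\omega^\vee$ is indispensable. Once that description is granted (as stated in the text just preceding the proposition), the rest of the argument reduces to a direct polyhedral computation, and no further work is needed to verify the $M$-grading statement.
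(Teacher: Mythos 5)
Your proposal is correct and follows essentially the same route as the paper: both arguments rest on the monomial description of $\omega^\vee = \Gamma(\mathcal{O}(-K))$ as the span of $\chi^{m'}$ for $m' \in P_{-K}\cap M$, identify evaluation of homomorphisms with multiplication inside the function field, and conclude that $\operatorname{tr}(\omega)$ is the product $\omega^\vee\omega$, hence generated by the monomials $\chi^{m+m'}$. Your version merely spells out a couple of steps the paper leaves implicit (the reduction of arbitrary $R$-linear maps to $k$-combinations of monomial multiplications, and the check that $m+m'\in\sigma^\vee\cap M$), which is fine.
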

\begin{proof}
    By definition
    $\omega$ is the module of global sections of the sheaf $ \mathcal{O}(K) $
    associated to the canonical divisor.
    Its dual module  
    $\omega^\vee$ is the module of global sections $\Gamma(  \mathcal{O}(-K) )$:  
    \[
        \omega^\vee = \bigoplus_{ \operatorname{div}(\chi^m) - K \geq 0 } k \cdot \chi^m
        =  \bigoplus_{ m \in P_{-K} \cap M } k \cdot \chi^m.
    \] 
    Both $\omega$ and $\omega^\vee$ are submodules of the function field $K(X)$.
    Evaluation of an element of $\sigma^\vee$ at an element of $\sigma$ is multiplication within $K(X)$. So 
    $\operatorname{tr}(\omega)$ is the product $\omega^\vee \omega$ in $K(X)$,
    and
    the statement of the proposition follows.
\end{proof}

Let $\mathfrak{p}$ be a prime in $\operatorname{Spec}(R)$, minimal with the property that it contains $\operatorname{tr}(\omega)$.
Since $\operatorname{tr}(\omega)$ is graded, $\mathfrak{p}$ is as well, and by the orbit-cone correspondence
it is associated to a face $F$ of $\sigma^\vee$:
\[
\mathfrak{p} =  \langle \chi^m | \ m \in (\sigma^\vee \cap M) \setminus F    \rangle.
\]

To decide whether an arbitrary graded prime $\mathfrak{p}$, associated to a face $F$ of $\sigma^\vee$, contains $\operatorname{tr}(\omega)$, we make the following definition:
\begin{definition}
Let
    \[
    F[1] :=
    \{
        x \in M_\mathbb{R} | \ \forall \rho , \ u_\rho \in F^{\perp}:
         \langle u_\rho, x \rangle =  1
        \}
\] 
be the affine-linear space that is the intersection of all
facet-defining hyperplanes containing $F$, translated by one lattice-length into the interior of
$\sigma^\vee$.

\begin{example}
In this example $\sigma$ is the planar cone given by the inequalities
$ 2y \geq x, \ 2y \geq - x$. The face $F$ is the point $(0,0)$, and $F[1]$ is the point $(0, 0.5)$.

\begin{figure}[h]
\centering
     \begin{tikzpicture} 
     
        \tkzInit[xmax=3,ymax=1.5,xmin=-3,ymin=-0.5]
   \tkzGrid
   \tkzAxeXY

        \coordinate (label) at (-0.3 , 0.8);
        \coordinate (O) at (0,0);
        \coordinate (X1) at (3, 1.5) ;
        \coordinate (X2) at (-3,1.5) ;
        
                \coordinate (Ox) at (0,0.5);
        \coordinate (X1x) at (2, 1.5) ;
        \coordinate (X2x) at (-2,1.5) ;

        \filldraw [blue, opacity = 0.2] (O)--(X1)--(X2) ; 
                \filldraw [red, very thick] (O)--(X1) (O) --(X2) ; 
                \filldraw [yellow, very thick] (Ox)--(X1x) (Ox) --(X2x) ; 

    \end{tikzpicture}  
   
\end{figure}

\end{example}

\end{definition}
The following theorem characterizes the graded primes lying over tr$(\omega)$
in terms of $F[1]$:
\begin{theorem}
    \label{theorem: min_primes}
    Let $\mathfrak{p} \subseteq R$ be a graded prime defined by a face $F$ of $\sigma^\vee$. Then
    tr$(\omega) \subseteq \mathfrak{p}$ holds if and only if
    $F[1]$ does not contain a lattice point.
    \end{theorem}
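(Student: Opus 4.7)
The plan is to translate both sides of the claimed equivalence into statements about lattice points, and then to interpolate between them using Proposition \ref{prop:gens}. Since $\mathfrak{p}$ is the monomial prime associated to the face $F$, a monomial $\chi^a$ lies outside $\mathfrak{p}$ precisely when $a \in F$. Combined with Proposition \ref{prop:gens}, this gives the reformulation:
\[
\operatorname{tr}(\omega) \not\subseteq \mathfrak{p} \iff \exists\, m \in P_K \cap M,\ m' \in P_{-K} \cap M \text{ with } m + m' \in F.
\]
The theorem is then equivalent to showing that the right-hand side is equivalent to $F[1] \cap M \neq \emptyset$.

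For the forward direction (if such $m,m'$ exist then $F[1] \cap M \neq \emptyset$), I would look at any ray $\rho$ with $u_\rho \in F^\perp$. The membership $m+m' \in F$ forces $\langle u_\rho, m+m' \rangle = 0$, while the defining inequalities of $P_K$ and $P_{-K}$ give $\langle u_\rho, m\rangle \geq 1$ and $\langle u_\rho, m'\rangle \geq -1$. These are only compatible with equality, so $\langle u_\rho, m\rangle = 1$ for all such $\rho$, i.e., $m$ is a lattice point in $F[1]$.

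For the reverse direction, I would start with $m_0 \in F[1] \cap M$. In general $m_0$ need not satisfy $\langle u_\rho, m_0\rangle \geq 1$ for rays $\rho$ with $u_\rho \notin F^\perp$, so I cannot directly take $(m,m') = (m_0, -m_0)$. The key idea is to correct this using a lattice point $v \in M$ in the relative interior of $F$, which exists since $F$ is a rational face (clear denominators of any rational relative-interior point). Such $v$ satisfies $\langle u_\rho, v\rangle = 0$ for $u_\rho \in F^\perp$ and $\langle u_\rho, v\rangle > 0$ for the remaining rays. Setting $m := m_0 + Nv$ and $m' := -m_0 + N'v$ for sufficiently large positive integers $N, N'$, a direct check shows $m \in P_K \cap M$, $m' \in P_{-K} \cap M$, and $m + m' = (N+N')v \in F$, completing the proof.

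The only real subtlety is the existence of the interior lattice correction $v$, but this is standard for rational polyhedral cones, so I expect no serious obstacles; the bulk of the argument is a bookkeeping of facet-inequalities, cleanly separated according to whether $u_\rho \in F^\perp$ or not.
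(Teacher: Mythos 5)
Your proposal is correct and takes essentially the same approach as the paper: the direction producing a lattice point extracts the $P_K$-component $m$ of a generator $\chi^{m+m'}\notin\mathfrak{p}$ and pins down $\langle u_\rho,m\rangle=1$ for $u_\rho\in F^\perp$ exactly as in the paper's argument, while the converse uses the same correction by a large multiple of a relative-interior lattice point of $F$ (the paper's $w$, your $v$) to build elements of $P_K\cap M$ and $P_{-K}\cap M$ summing to a point of $F$.
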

\begin{proof}
Towards a contradiction we assume tr$(\omega) \subseteq \mathfrak{p}$,
and that there is a lattice point $z$ in $F[1]$.
Let $w$ be a lattice point in the relative interior of $F$.
Then
for every primitive generator $u_\rho$ of $\sigma$ the following inequalities hold.
\begin{align*}
    \langle u_\rho, w  \rangle > 0,  \text{ if } u_\rho \not \in F^\perp, \
    \langle u_\rho, w  \rangle = 0,  \text{ if } u_\rho \in F^\perp.
\end{align*}
After replacing $w$
with a positive integer multiple, we may assume the inequality
$ \langle u_\rho , w \rangle > \langle u_\rho, z\rangle + 1 $
to hold for all $u_\rho$ not in $F^\perp$. 
Then $w - z \in P_{-K} $
and $z +w \in P_K$.
We obtain
\[2w  =  (w  -z)   +  (z+w) \in P_{-K} \cap M + P_K \cap M. \]
By Proposition \ref{prop:gens}
$\chi^{2w}$ lies in $\sqrt{\operatorname{tr}(\omega)}$.
$\chi^{2w}$ is not contained in $\mathfrak{p}$, and hence the radical of tr($\omega$)
is not contained in $\mathfrak{p}$. Thus, tr($\omega$) is not contained either.\\ 
 
For the other direction let 
$\chi^w \in   \operatorname{tr}(\omega) \setminus \mathfrak{p}$.
Then $w$ lies in $F$, and by
Proposition \ref{prop:gens}
there is an element $z \in P_K \cap M$
with $w - z \in P_{-K}$:
\[
       -\langle u_\rho ,z \rangle= \langle u_\rho ,w - z \rangle \geq -1
\]
holds for all $u_\rho$ in $F^\perp$.
So $\langle u_\rho, z \rangle \leq 1$, and hence $\langle u_\rho ,z \rangle = 1$ since $z \in P_K$.
We obtain
$z \in F[1]$.
\end{proof}

\begin{remark}
Theorem \ref{theorem: min_primes} can easily
be generalized to normal toric varieties that are
not affine. We use the notation from \cite{cox2011toric}.
Let $X$ be a normal toric variety defined by a fan $\Sigma$ of rational,
pointed cones in $N_\mathbb{R}$,
$Y \subseteq X$ a non-empty torus-invariant subvariety, given by a
cone $\sigma \in \Sigma$.
\end{remark}
\begin{theorem}
    \label{thm: charac}
    The subvariety $Y$ is contained in the non-Gorenstein locus $Z$ if and only if
    there is no element $m$ of $M$, such that
    $\langle u_\rho, m \rangle = 1$
    holds for every ray $\rho$ of $\sigma$.
\end{theorem}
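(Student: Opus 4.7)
The plan is to reduce the statement to the affine case already handled by Theorem \ref{theorem: min_primes}, applied on the invariant affine chart $U_\sigma$. Since Gorensteinness is an open condition, the non-Gorenstein locus $Z$ is closed, and so $Y \subseteq Z$ is equivalent to the generic point $\eta_Y$ of $Y$ lying in $Z$. By the orbit-cone correspondence, $\eta_Y$ is the generic point of the orbit $O_\sigma$, which lies in $U_\sigma = \operatorname{Spec}(R_\sigma)$ with $R_\sigma = k[\sigma^\vee \cap M]$, and corresponds there to the graded prime $\mathfrak{p}_\sigma$ attached to a face $F$ of $\sigma^\vee$. Lemma \ref{lemma:Herzog} applied on $U_\sigma$ then reduces the question to deciding whether $\operatorname{tr}(\omega_\sigma) \subseteq \mathfrak{p}_\sigma$, where $\omega_\sigma$ is the canonical module of $R_\sigma$.

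Theorem \ref{theorem: min_primes} characterizes this inclusion as the absence of a lattice point in $F[1]$, so the remaining step is to unpack $F[1]$ explicitly. Because $\sigma$ is pointed, $\sigma^\perp \subseteq \sigma^\vee$, so $F = \sigma^\perp \cap \sigma^\vee = \sigma^\perp$, and hence $F^\perp = \operatorname{span}(\sigma)$. Every primitive ray generator $u_\rho$ of $\sigma$ therefore lies in $F^\perp$, and these are exactly the vectors appearing in the definition of $F[1]$, since the facets of $\sigma^\vee$ containing $F = \sigma^\perp$ are precisely those dual to the rays of $\sigma$. Substitution gives
\[
F[1] = \{\, x \in M_\mathbb{R} \mid \langle u_\rho, x \rangle = 1 \text{ for every ray } \rho \text{ of } \sigma \,\},
\]
and the statement follows immediately: $Y \subseteq Z$ if and only if this affine-linear space contains no lattice point, that is, if and only if there is no $m \in M$ with $\langle u_\rho, m \rangle = 1$ for every ray $\rho$ of $\sigma$.

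The main obstacle I anticipate is purely one of bookkeeping in the orbit-cone dictionary: identifying the face associated with $O_\sigma$ as $\sigma^\perp$, and checking that every ray of $\sigma$ is captured by the $F^\perp$-condition in the definition of $F[1]$. The algebraic content of the theorem is already carried by Theorem \ref{theorem: min_primes}, and what I am doing here is simply its geometric reformulation on a single affine chart, using that the question of whether $Y$ meets the non-Gorenstein locus is local around $\eta_Y$.
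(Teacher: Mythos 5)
Your proposal is correct and takes essentially the same route as the paper: restrict to the affine chart $U_\sigma$, identify $Y$ (via its generic point / the orbit--cone correspondence) with the graded prime of the face $F=\sigma^\perp$ of $\sigma^\vee$, and then apply Lemma \ref{lemma:Herzog} and Theorem \ref{theorem: min_primes}, noting that all ray generators of $\sigma$ lie in $(\sigma^\perp)^\perp$ so that $F[1]$ is cut out exactly by the equations $\langle u_\rho, x\rangle = 1$. The only difference is cosmetic: you phrase the reduction through the generic point $\eta_Y$, whereas the paper replaces $Y$ by its intersection with the chart.
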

\begin{proof}
    Both $Z$ and $Y$ are closed.
    Since $Y$ intersects the affine open scheme $\operatorname{Spec}( k[ \sigma^\vee \cap M] )$ in 
    $\operatorname{Spec}( k[ \sigma^\perp \cap M] )$,
    we may replace $Y$ with $\operatorname{Spec}( k[ \sigma^\perp \cap M] )$,
    $X$ with $\operatorname{Spec}( k[ \sigma^\vee \cap M] )$, and $Z$ with the non-Gorenstein locus of $\operatorname{Spec}( k[ \sigma^\vee \cap M] )$.

    By Lemma \ref{lemma:Herzog} and Theorem \ref{theorem: min_primes},
     $Y$ is contained in $Z$ if and only if for the choice
    $F = \sigma^\perp$, $F[1]$ does not contain a lattice point. In other words,
    if there is no element $m$ of $M$ with
    \[
   u_\rho \in (\sigma^\perp)^\perp \implies \langle u_\rho, m \rangle =  1  \]
    for every ray $\rho$ of $\sigma$.
    All ray generators of $\sigma$ are contained in 
$
        (\sigma^\perp)^\perp = \sigma - \sigma.
$
\end{proof}

\section{Non-Gorenstein loci of Hibi rings}

Hibi rings are certain toric rings associated to finite, partially ordered sets.
We recall a description of the the associated polyhedral cone, and apply
the results from the previous chapter.
Finally, Theorem \ref{thm: charac} describes the non-Gorenstein locus as a combinatorial
measure for how far posets deviate from being graded, relating a geometric and a combinatorial notion of irregularity.

    We call a set $P$, together with a transitive, reflexive order $\leq$,
    a partially ordered set, or poset.
    To elements
    $a \leq b$ of $P$ we associate the interval
    $
    [a,b] : =   \{ x \in P | \ a \leq x \leq b\},
    $
    and call $P$ bounded if 
    $P$ is the interval
$
       [a, b]
$
for some elements $a$ and $b$.

    For different elements $a \leq b$,
    the covering relation $a \lessdot b$ is defined to hold if
$
    \#[a, b]     = 2.
$
That is, no elements lie properly between $a$ and $b$.

    We call a totally ordered poset
   $a_1 \leq \cdots \leq a_r$
   of cardinality $r$
    a chain of length $r-1$.
    $P$ is defined to be pure if all chains contained in $P$,
    maximal with respect to inclusion,
    have the same length.

    The set $\mathcal{I}(P)$ of subsets $I \subseteq P$, that are closed from below,
    is called the lattice of order ideals.

\begin{definition}
    Let $P$ be a finite poset and let $k[t, x_p, \ p \in P]$
    be the free $k$-algebra in the variables
    $x_p$ for all $p$ in $P$, and the variable $t$.
    For each order ideal $I$ we denote the monomial
    \[
    x^I := \prod_{p \in I} x_p.    
    \]
    The $k$-algebra
$R(P)$
    generated by all monomials $tx^I$, where $I$ runs over all poset-ideals,
    is the Hibi-ring associated to $P$.
\end{definition}
In fact,
    $R(P)$ is the normal toric ring
$
    k[C(P) \cap M]    
$
    associated to a cone $C(P)$, and of Krull-dimension $\#P + 1$
(\cite{Hibi_rings}).
The cone
    \[
        C(P)  = \{  \psi: \overline{P} \longrightarrow \mathbb{R} | \ \forall a, b \in \overline{P}:
        a \leq b \implies \psi(a) \geq \psi(b) \ , \psi(\infty) = 0   \}
    \] 
   consists of the order-reversing maps from 
$
    \overline{P} := P \dot{\cup} \{-\infty, \infty\}    
$
    to $\mathbb{R}$, taking $0$ as minimal value.
It is the cone over the order-polytope 
    \[
    Q(P) :=    \{ \psi \in C(P)|\ \psi(- \infty) = 1 \}  .
    \]
The generators of $R(P)$, the order ideals, and the vertices of $Q(P)$,
are in natural bijection, as can be seen by associating to an order ideal $I$ the vertex $\psi_I$ of 
the order- polytope $Q(P)$:
\begin{align*}
\psi_I :  \overline{P}  &\longrightarrow \mathbb{Z} \\
  p & \longmapsto 
            \begin{cases}
                1, & \text{for } p \in I \cup \{ -\infty\} \\
                0, & \text{for } p \notin I \cup \{- \infty\} \\
            \end{cases}.
\end{align*}

The faces of $C(P)$ have a combinatorial description.
The maximal proper faces, called facets, are in bijective correspondence to
the order relations:
\[
F_{a \lessdot b  } :=   \{ \psi \in C(P)  | \ \psi(a) = \psi(b)   \}.
\]
More generally, faces of $C(P)$ are families of functions $\psi$ that are constant along certain equivalence relations on $P$:
\begin{definition}
Let $P$ be a finite poset. A quotient poset of $P$ is a poset $P'$ together with a surjective, order preserving map
$\phi : P \longrightarrow P'$ with connected fibres, such that the order relation on $P'$ is the transitive hull of
the relation $a' \leq' b' := \exists a \in \phi^{-1}(a'),\ b \in \phi^{-1}(b') \mid \ a \leq b$.
\end{definition}

\begin{definition}
    \label{def: Fphi}
    Let $\phi: \overline{P} \twoheadrightarrow P'$ be a quotient poset of $\overline{P}$.
We denote by $F_\phi$ the face
\[F_\phi := \underset{\substack{ a \lessdot b, \\ \phi(a) = \phi(b) }}{\bigcap}   F_{a \lessdot b} \]
of $C(P)$, consisting of the functions $\psi$ that are constant on the fibres of $\phi$.
\end{definition}
This definition bijectively identifies quotient posets and (possibly empty) faces of $C(P)$ (\cite{Geissinger}).
The inclusion order of faces corresponds to the refinement order of those equivalence relations $\sim_\phi$, that identify the fibres of $\phi$.

\begin{example}
    \label{exmp: poset}
    Consider the partially-ordered set 
$
    P =  \{ p_1, p_2, p_3 \}    
$
    with only the relation $p_1 \geq p_2$.

\begin{figure}[H]
\centering
 \subfloat[][]{
   \begin{tikzpicture}  
            \filldraw[black] (4.5,1)node[anchor=west] {$\overline{P}:$};

            [scale=.9,auto=center,every node/.style={circle,fill=red!10}] 
              
            \node (a2) at (6,0) {$p_2$};  
            \node (a1) at (6,2)  {$p_1$}; 
            \node (a3) at (8,1)  {$p_3$};  
            \node (a4) at (7,3) {$\infty$};
            \node (a5) at (7,-1) {$-\infty$};

            \draw (a1) -- (a2); 
            \draw (a1) -- (a4);
            \draw (a3) -- (a4);
            \draw (a2) -- (a5);
            \draw (a3) -- (a5);
          \end{tikzpicture}  
 }
 \subfloat[][]{

\tdplotsetmaincoords{60}{130}

    \begin{tikzpicture}[fill=lightgray, tdplot_main_coords, scale = 2]

        \tdplotsetrotatedcoords{12}{10}{15}

        \begin{scope}[draw=red, tdplot_rotated_coords, axis/.style={->,dashed}]

            \coordinate (A1) at (0,0,0);
            \coordinate (B1) at (0,0,1) ;
            \coordinate (A2) at (0,1,0) ;
            \coordinate (B2) at (0,1,1) ;
            \coordinate (A3) at (1, 1, 0) ;
            \coordinate (B3) at (1,1,1);

            \draw[axis, black] (0, 0, 0) -- (2, 0, 0) node [right] {$\psi(p_1)$};
            \draw[axis, black] (0, 0, 0) -- (0, 2, 0) node [above] {$\psi(p_2)$};
            \draw[axis, black] (0, 0, 0) -- (0, 0, 2) node [above] {$\psi(p_3)$};

        \draw[fill] (A1) -- (A2) -- (A3) -- cycle;
        \draw[fill] (A1) -- (A2) -- (B2) -- (B1) -- cycle;
        \draw[fill] (A1) -- (A3) -- (B3) -- (B1) -- cycle;
        \draw[fill] (A2) -- (A3) -- (B3) -- (B2) -- cycle;

        \draw[dotted, thick] (A1) -- (A2);

        \end{scope}


        \end{tikzpicture}          
 }
\end{figure}



There are nine faces of $Q(P)$ of dimension one, corresponding to order preserving, surjective maps
to the unique bounded, partially ordered set $  q_0 \leq q_1 \leq q_2  $ with three elements.
We give a list
that matches the one-dimensional faces $F$ to the fibres of $\phi$.

\begin{center}
\begin{tabular}{ |c|c|c|c|c|c } 
\hline
Nr. & $F$ & $\phi^{-1}(q_2) $ & $\phi^{-1}(q_1) $ & $\phi^{-1}(q_0) $ \\
\hline
1   & conv($ \{ (0, 0, 0), \ (0,0, 1)  \} $)    & $\{ \infty, p_1, p_2 \}$  & $\{p_3\}$ & $\{-\infty\}$\\ 
2   & conv($ \{ (0, 0, 0), \ (0,1, 0)  \} $)    & $\{ \infty, p_1, p_3 \}$  & $ \{p_2\} $ & $\{-\infty\}$\\ 
3   & conv($ \{ (0, 0, 0), \ (1, 1, 0)  \} $)   & $ \{ \infty, p_3 \}$ & $\{p_1, p_2\}$& $\{-\infty\}$\\ 
4   & conv($ \{ (0, 0, 1), \ (0, 1, 1)  \} $)   & $\{ p_1, \infty  \} $ & $\{p_2\}$ & $\{ p_3, -\infty \}$ \\ 
5   & conv($ \{ (0, 0, 1), \ (1, 1, 1)  \} $)   & $\{\infty\}$ & $\{ p_1, p_2  \}$& $\{ p_3, - \infty\}$ \\ 
6   & conv($ \{ (0, 1, 0), \ (0, 1, 1)  \} $)   & $ \{ p_1,  \infty\}$ & $\{p_3\}$ & $\{ p_2, -\infty \}$\\ 
7   & conv($ \{ (0, 1, 0), \ (1, 1, 0)  \} $)   & $ \{ p_3,  \infty\}$ & $\{p_1\}$& $\{ p_2, -\infty \}$\\ 
8   & conv($ \{ (0, 1, 1), \ (1, 1, 1)  \} $)   & $\{\infty\}$ & $\{p_1\}$& $\{p_3, p_2, -\infty \}$ \\ 
9   & conv($ \{ (1, 1, 0), \ (1, 1, 1)  \} $)   & $\{\infty\}$ & $\{p_3\}$ & $\{ p_1, p_2, -\infty \}$ \\ 
\hline
\end{tabular}
\end{center}

\end{example}

Applying the results from Chapter 3 now allows us to characterize the non-Gorenstein locus
in terms of non-graded subsets of $P$:
\begin{definition}
    Let $P$ be a finite poset. We call $P$ graded if
    there is an order-reversing map $ \psi: P \longrightarrow \mathbb{Z}$,
    such that for every covering relation $a \lessdot b $ it holds $ \psi(a) = \psi(b) +1 $. 
    $\psi$ is called a grading of $P$.
\end{definition}
Let the face $F_\phi$ of $C(P)$ be defined by a quotient poset $\phi: \overline{P}  \twoheadrightarrow P'$, as in
definition \ref{def: Fphi}.
\begin{theorem}
    \label{theorem: graded_fibres}
    $F_\phi[1]$ contains a lattice point if and only if every fibre of $ \phi $, equipped with the restricted order relation,  is graded.
\end{theorem}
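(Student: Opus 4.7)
The plan is to make $F_\phi[1]$ entirely explicit in terms of integer functions on $\overline{P}$, and then to reduce the lattice-point question to independent grading problems on the fibres of $\phi$.

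First I would unpack $F_\phi[1]$. The lattice $M$ consists of integer functions $\psi : \overline{P} \to \mathbb{Z}$ with $\psi(\infty) = 0$; the rays of $\sigma = C(P)^\vee$ correspond to covering relations $a \lessdot b$ in $\overline{P}$, with primitive generator $u_{a \lessdot b} : \psi \mapsto \psi(a) - \psi(b)$, and this functional is non-negative on $C(P)$ by the order-reversing condition. By the definition of $F_\phi$, a ray $u_{a \lessdot b}$ lies in $F_\phi^\perp$ precisely when $\phi(a) = \phi(b)$. Thus $F_\phi[1]$ consists of those $\psi \in M_\mathbb{R}$ satisfying $\psi(a) - \psi(b) = 1$ for every such covering pair, and a lattice point of $F_\phi[1]$ is such a $\psi$ that is integer-valued.

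Next I would observe that these equations decouple across the fibres of $\phi$, since every constraint involves two elements of a single fibre. The condition $\psi(\infty) = 0$ affects only the fibre containing $\infty$ and can always be arranged by a global shift on that fibre, since the equations involve only differences. Hence a lattice point of $F_\phi[1]$ exists if and only if for every fibre $Q = \phi^{-1}(q)$ there is an integer-valued function on $Q$ satisfying $\psi(a) - \psi(b) = 1$ for every $\overline{P}$-covering pair $a \lessdot b$ with $a, b \in Q$.

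The crucial combinatorial step is then to identify this system with the grading equations of the subposet $Q$. One direction is immediate: an $\overline{P}$-covering between elements of $Q$ is automatically a covering of $Q$. For the other direction I would use that fibres of the order-preserving map $\phi$ are order-convex: if $a, b \in Q$ and $a \leq c \leq b$ in $\overline{P}$, then $\phi(a) \leq \phi(c) \leq \phi(b)$ forces $\phi(c) = \phi(a)$, so $c \in Q$. Consequently any maximal $\overline{P}$-chain of covers from $a$ to $b$ stays inside $Q$, and if $a \lessdot_Q b$ is a covering of the subposet it admits no intermediate vertex, so this chain has length one and is itself an $\overline{P}$-covering. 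With this identification the $F_\phi[1]$-constraints on $\psi|_Q$ become exactly the integer grading equations for $Q$, and the theorem follows. The only real obstacle here is the covering-relations identification; once it is in hand, the rest is bookkeeping around the description of $F_\phi[1]$.
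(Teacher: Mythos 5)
Your proof is correct and follows essentially the same route as the paper: write out $F_\phi[1]$ as the system $\psi(a)=\psi(b)+1$ over covering pairs lying in a common fibre, and match these equations with the grading conditions fibre by fibre. You are in fact slightly more careful than the paper at one point, namely in verifying (via order-convexity of the fibres) that covering relations of $\overline{P}$ between elements of a fibre agree with covering relations of the fibre as an induced subposet, a step the paper passes over silently.
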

\begin{proof}
    By definition, $F_\phi$ is the intersection
    \[
    F_\phi = \bigcap_{ \substack{ a \lessdot b, \\ \phi(a) = \phi(b) }  }  F_{a \lessdot b}
    \]
    of all facets $F_{a \lessdot b}$, where $a$ and $b$ are elements in the same fibre.
    Each such facet is supporting, so we obtain
    \[
        F_\phi[1] = 
     \{  \psi \in M_\mathbb{R} | \ \forall a \lessdot b, \  \phi(a) = \phi(b) : \ 
    \psi(a) = \psi(b) + 1   \},
    \]
     and consequently the lattice points of $F_\phi[1]$
    are functions $\psi$
    that define a grading of each fibre of $\phi$, showing the implication from left to right.
	 Conversely, a separate choice of gradings on each fibre together form an integral element of $F[1]$.
\end{proof}

The remainder of this chapter is devoted to characterizing the maximal components of the non-Gorenstein locus.

\begin{lemma}
The subsets $A \subseteq \overline{P}$
that appear as fibres
of quotient poset maps $\phi: \overline{P} \longrightarrow P'$
are determined by the property that they be connected and
\[a \leq b \leq c, \ a, c \in A \implies b \in A. \]
\end{lemma}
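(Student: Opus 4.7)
The plan is to prove the two implications separately, with the backward direction carrying the bulk of the work.

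For the forward direction, suppose $A = \phi^{-1}(p')$ for some $p' \in P'$. Connectedness of $A$ is built directly into the definition of a quotient poset map. For the interval condition, if $a \leq b \leq c$ with $a, c \in A$, then order-preservation of $\phi$ gives $\phi(a) \leq' \phi(b) \leq' \phi(c)$ in $P'$, and since $\phi(a) = \phi(c) = p'$, antisymmetry of the partial order on $P'$ forces $\phi(b) = p'$, so $b \in A$.

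For the converse, given a connected subset $A \subseteq \overline{P}$ satisfying the interval property, I would construct an explicit quotient poset that realizes $A$ as a fibre. Set $P' := (\overline{P} \setminus A) \sqcup \{*\}$, let $\phi : \overline{P} \to P'$ send every element of $A$ to $*$ and be the identity on the complement, and equip $P'$ with the transitive hull of the induced preorder. Then $\phi$ is order-preserving by construction, the fibre over $*$ is $A$ (connected by hypothesis), and every other fibre is a singleton and hence trivially connected.

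The main obstacle is to show that $P'$ is genuinely a poset, i.e.\ that the transitive hull is antisymmetric. I plan to argue by contradiction: suppose there is a non-trivial cycle $[t_0] \leq' [t_1] \leq' \cdots \leq' [t_l] = [t_0]$ in $P'$. Each step lifts to an inequality in $\overline{P}$ between some choice of representatives, and consecutive singleton classes paste together into an honest chain in $\overline{P}$. If $\{*\}$ never appears in the cycle, the entire loop lifts to a cycle in $\overline{P}$, contradicting antisymmetry of the order there. Otherwise, after cyclically rotating we may assume $[t_0]$ is a singleton distinct from $\{*\}$, while $\{*\}$ appears somewhere in the cycle; walking forward from $t_0$ until the first occurrence of $\{*\}$ and backward until the last produces elements $a, a' \in A$ with $a \leq t_0 \leq a'$ in $\overline{P}$. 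The interval hypothesis then forces $t_0 \in A$, contradicting that $[t_0]$ is a singleton class different from $\{*\}$.
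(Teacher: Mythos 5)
Your proof is correct and uses the same construction as the paper: collapse $A$ to a single point $*$, keep the complement as singletons, and take the transitive hull of the induced relation. The paper dismisses both the forward direction and the verification that $P'$ is actually a poset as clear, whereas you supply the antisymmetry argument explicitly (lifting a putative cycle in $P'$ to produce $a \leq t_0 \leq a'$ with $a, a' \in A$ and invoking the interval property); this is a genuine gap-fill rather than a different approach.
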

\begin{proof}
To see that every such subset $A$ appears as a fibre, define
$P' := P \setminus A \cup\{  \star \}$ with the natural quotient map $\phi_A: \overline{P} \rightarrow P'$, and equip it with
the transitive closure of the relation
$a' \leq' b' := \exists a \in \phi_A^{-1}(a'),\ b \in \phi_A^{-1}(b') \mid \ a \leq b$.
The other direction is clear.
\end{proof}

We call subsets $A \subseteq \overline{P}$, appearing as fibres, complete.
For any quotient poset morphism $\phi$, having a non-graded fibre $A$,
consider the face
$F_{\phi_A}$ defined in the proof above.
It determines a subvariety of the non-Gorenstein locus
that is a superset of the variety determined by
 $F_\phi$.
We obtain:
\begin{theorem}
\label{theorem: max_comp}
The map 
\[
A \longmapsto
F_{\phi_A}
\]
bijectively identifies
the maximal components of the non-Gorenstein locus with subsets $A$ of $\overline{P}$, that are minimal with the property that 
they be complete and not graded.
\end{theorem}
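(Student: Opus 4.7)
The plan is to combine Theorem \ref{theorem: min_primes} and Theorem \ref{theorem: graded_fibres} with the face-quotient correspondence recorded in Definition \ref{def: Fphi}. The maximal components of the non-Gorenstein locus correspond to minimal primes over $\operatorname{tr}(\omega)$, and under the orbit-cone correspondence these come from faces $F$ of $C(P)$ that are maximal with $F[1] \cap M = \emptyset$. Translating via Theorem \ref{theorem: graded_fibres}, the task becomes: identify the finest quotient posets $\phi : \overline{P} \twoheadrightarrow P'$ that have at least one non-graded fibre.

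The first step is to record the monotonicity of the face correspondence: $F_\phi \subseteq F_{\phi'}$ if and only if $\phi$ refines $\phi'$, i.e., $\phi'$ factors through $\phi$. This is immediate from Definition \ref{def: Fphi}, since constancy on the smaller fibres of $\phi$ is a weaker condition than constancy on the larger fibres of $\phi'$. In particular, the maximal components of the non-Gorenstein locus correspond to the finest $\phi$ with a non-graded fibre.

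Next I would reduce to quotients of the form $\phi_A$. Given any $\phi$ with a non-graded fibre $B$, finiteness of $\overline{P}$ provides a subset $A \subseteq B$ that is minimal among complete non-graded subsets of $\overline{P}$. The fibres of $\phi_A$, namely $A$ together with the singletons of $\overline{P} \setminus A$, are each contained in a fibre of $\phi$; therefore $\phi_A$ refines $\phi$ and $F_{\phi_A} \supseteq F_\phi$. Consequently every maximal component of the non-Gorenstein locus arises as $F_{\phi_A}$ for some complete non-graded $A$.

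The remaining and main step is to characterize when $F_{\phi_A}$ is itself maximal. If $A$ is not minimal, any complete non-graded $A' \subsetneq A$ produces $\phi_{A'}$ which strictly refines $\phi_A$ and still has a non-graded fibre, so $F_{\phi_A} \subsetneq F_{\phi_{A'}}$ and $F_{\phi_A}$ is not maximal. Conversely, suppose $A$ is minimal and $\phi'$ strictly refines $\phi_A$. The singletons of $\phi_A$ cannot be split, so $\phi'$ must partition $A$ into complete subsets $A_1, \dots, A_k$ with $k \geq 2$; any non-graded fibre of $\phi'$ is then necessarily one of the $A_i \subsetneq A$, and by minimality of $A$ each such proper complete subset is graded, a contradiction. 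Hence $F_{\phi_A}$ is maximal. Injectivity of $A \mapsto F_{\phi_A}$ is clear, since $A$ is recovered as the unique non-singleton fibre of $\phi_A$, completing the bijection.
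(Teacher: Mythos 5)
Your proposal is correct and follows essentially the same route as the paper: the key observation in both is that $\phi_A$ refines any quotient poset having $A$ as a (non-graded) fibre, so $F_{\phi_A} \supseteq F_\phi$, and maximal faces with empty $F[1]\cap M$ therefore correspond to minimal complete non-graded $A$. You simply spell out the monotonicity of $\phi \mapsto F_\phi$ and the minimality/maximality matching in more detail than the paper, whose proof is compressed into the paragraph preceding the theorem.
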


\begin{corollary}
    \label{cor: dim_locus}
    The dimension of the non-Gorenstein locus is
$ \max  \{ \#P - \#A + 2 \},
$
where $A$ runs over all non-graded, complete subsets of $\overline{P}$.
\end{corollary}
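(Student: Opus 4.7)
The plan is to apply Theorem \ref{theorem: max_comp} together with the orbit-cone correspondence to compute the dimension of each maximal irreducible component of the non-Gorenstein locus in terms of the associated subset $A \subseteq \overline{P}$, and then to verify that maximizing over minimal non-graded complete subsets and over all non-graded complete subsets yields the same number.

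First, by Theorem \ref{theorem: max_comp}, the maximal components of the non-Gorenstein locus are in bijection with the minimal non-graded complete subsets $A \subseteq \overline{P}$ via $A \mapsto F_{\phi_A}$. The cone $C(P)$ is full-dimensional in the lattice $M$ of integer-valued functions on $\overline{P}$ vanishing at $\infty$, which has rank $\#P + 1$. Under the orbit-cone correspondence, the dimension of the torus-invariant subvariety attached to the face $F_{\phi_A}$ then equals $\dim F_{\phi_A}$. So the key computation is to determine $\dim F_{\phi_A}$: its affine span consists of real functions on $\overline{P}$ that are constant on each fibre of $\phi_A$ and vanish at $\infty$. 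The fibres of $\phi_A$ are the set $A$ itself together with the singletons $\{x\}$ for $x \in \overline{P}\setminus A$, totalling $\#\overline{P} - \#A + 1 = \#P - \#A + 3$ fibres; subtracting one for the linear constraint $\psi(\infty)=0$ yields
\[
\dim F_{\phi_A} = \#P - \#A + 2.
\]

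To conclude, the dimension of the non-Gorenstein locus is the maximum of the dimensions of its irreducible components, i.e.\ $\max\{\#P - \#A + 2\}$ with $A$ ranging over minimal non-graded complete subsets of $\overline{P}$. I would then observe that this maximum coincides with the maximum over all non-graded complete subsets: any such $A$ contains some minimal non-graded complete $A' \subseteq A$ by finiteness, and $\#A' \leq \#A$ forces $\#P - \#A' + 2 \geq \#P - \#A + 2$, so the maximum is attained on a minimal subset. There is no real obstacle once Theorem \ref{theorem: max_comp} is in place; the only step requiring care is the bookkeeping around the distinction between $P$ and $\overline{P}$ and the $\psi(\infty)=0$ convention in the fibre count.
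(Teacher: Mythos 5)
Your proof is correct and follows essentially the same route as the paper: both reduce to computing $\dim F_{\phi_A}$ via the orbit--cone correspondence, the paper by identifying it with the dimension $\#P-\#A+2$ of the cone $C(P\setminus A\cup\{\star\})$ of the quotient poset, and you by the equivalent direct count of fibres of $\phi_A$ minus the $\psi(\infty)=0$ constraint. Your closing observation that maximizing over minimal non-graded complete subsets agrees with maximizing over all of them is a detail the paper leaves implicit, and is a welcome addition.
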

\begin{proof}
The cone $C(P)$ has dimension 
$\#P + 1$ and the cone $C(P \setminus A \cup\{  \star \})$
is of dimension $\#P - \#A + 2$.
\end{proof}

\begin{corollary}
    The codimension of the non-Gorenstein locus is at least 4.
\end{corollary}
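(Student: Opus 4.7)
The plan is to translate the codimension statement into a purely combinatorial lower bound on the size of a non-graded complete subset of $\overline{P}$, and then to prove that bound by elementary counting.

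First, I would use that $R(P)$ has Krull dimension $\#P+1$, so Corollary \ref{cor: dim_locus} rewrites the codimension of the non-Gorenstein locus as
\[
\min\bigl\{\, \#A - 1 \;\big|\; A \subseteq \overline{P} \text{ complete and not graded}\,\bigr\}.
\]
The task therefore reduces to showing $\#A \geq 5$ whenever $A$, equipped with the order restricted from $\overline{P}$, fails to be graded.

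Next, I would rely on the standard characterization: a finite poset $Q$ is \emph{not} graded if and only if $Q$ contains two saturated chains with common endpoints but different lengths. The direction I really need is proved by attempting to build a grading $\psi$ along the Hasse diagram of $Q$, starting from any chosen vertex and extending by $\pm 1$ across cover edges; the only obstruction to global consistency is exactly a pair of saturated chains of unequal length between two fixed vertices. The converse is clear: given $\psi$, every saturated chain from $b$ down to $a$ has length $\psi(a) - \psi(b)$.

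Finally, I carry out the counting. Apply the characterization to $A$ and fix two saturated chains inside $A$,
\[
a = x_0 \lessdot x_1 \lessdot \cdots \lessdot x_k = b, \qquad a = y_0 \lessdot y_1 \lessdot \cdots \lessdot y_l = b,
\]
with $k > l$, all coverings understood in the restricted order on $A$. The case $l = 1$ is impossible, because $a \lessdot b$ in $A$ would forbid any element of $A$ to lie strictly between $a$ and $b$, contradicting $x_1 \in A$. Hence $l \geq 2$ and $k \geq 3$. The two chains share at least the endpoints $a$ and $b$, so
\[
\#A \;\geq\; (k+1) + (l+1) - 2 \;=\; k + l \;\geq\; 5,
\]
and therefore the codimension is at least $4$. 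The only mildly nontrivial ingredient is the chain-length characterization of gradedness; the remainder is transparent counting, so I do not anticipate a real obstacle. The one subtlety to keep in mind is that covering relations inside $A$ can differ from those in $\overline{P}$, but the entire counting argument takes place within $A$ and its restricted order.
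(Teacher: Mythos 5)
Your reduction is the same as the paper's: by Corollary \ref{cor: dim_locus} the codimension equals $\min\{\#A-1\}$ over non-graded complete subsets $A$ of $\overline{P}$, so everything rests on the claim that a non-graded finite poset has at least $5$ elements. The paper simply asserts this; you try to prove it, but the lemma you build the count on is false. It is not true that a finite poset fails to be graded if and only if it contains two saturated chains with common endpoints and different lengths. The obstruction to extending $\psi$ across the Hasse diagram is an unbalanced \emph{cycle} in the Hasse graph, and such a cycle need not split into two monotone chains between a common pair of endpoints: it can be a zigzag $a_1<b_1>a_2<\cdots>a_u<b_u>a_1$ with $u\geq 2$, which is exactly why Definition \ref{def: a_i_b_j_vs_p} in Section 4 quantifies over such configurations rather than over pairs of chains (your characterization is the case $u=1$ only). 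Concretely, take the $8$-element poset with cover relations $a_1\lessdot x\lessdot b_1$, $a_2\lessdot b_1$, $a_2\lessdot b_2$, $a_3\lessdot b_2$, $a_3\lessdot y\lessdot b_3$, $a_1\lessdot b_3$. Every pair of comparable elements here admits exactly one saturated chain, yet propagating $\psi$ from $b_1$ around the cycle forces $\psi(a_1)=2$ via $x$ and $\psi(a_1)=0$ via $b_3$, so the poset is not graded. Your argument never sees such a poset and therefore does not prove $\#A\geq 5$ for all non-graded $A$.

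The conclusion itself is still true and can be repaired along the lines you intended: a non-graded poset contains an unbalanced cycle in its Hasse diagram; Hasse diagrams have no $3$-cycles (a cover cannot coexist with a longer chain between the same two elements), and any $4$-cycle must alternate up--down and is therefore balanced; hence an unbalanced cycle, and so the poset, has at least $5$ vertices. Separately, even in the two-chain situation your count $\#A\geq (k+1)+(l+1)-2$ implicitly assumes the chains meet only at their endpoints; this is harmless since one can pass to a minimal witness, but it should be said. As written, though, the proof has a genuine gap at the characterization of gradedness.
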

\begin{proof}
	Every non-graded sub-poset $A \subseteq \overline{P}$ has at least 5 elements.
\end{proof}

\begin{remark}
    Corollary \ref{cor: dim_locus}, generalizes the already known characterizations of
    Gorenstein Hibi rings and Hibi rings that are Gorenstein on the pointed spectrum.
    By the corollary, the non-Gorenstein locus is empty if and only if $\overline{P}$ is graded.
    Since $\overline{P}$ is a bounded poset, it is graded if and only if it
    is pure, which holds if and only if $P$ is pure.
    
    Similarly, by the corollary, the locus is zero-dimensional if and only if $P \dot{\cup} \{ \infty \}$
    and $P \dot{\cup} \{- \infty \}$ are graded.
    It is easy to show that this is equivalent to every connected component of $P$ being pure
    (for a proof consider Lemma 5.2 in \cite{Herzog}).
\end{remark}

\section{Comparison to \cite{Gor_loci_Janet}}
In the paper \cite{Gor_loci_Janet},
non-Gorenstein loci of Hibi rings are studied.
In particular, a family of graded ideals is
described in Theorem 4.5, that intersect in the radical ideal
$\sqrt{\operatorname{tr}(\omega)}$.
In this chapter we compare results and deduce Theorem 4.5 from the discussion in Chapter 3.\\

To state Theorem 4.5 we need the definitions of rank and distance of elements $a \leq b$ in a poset $P$:
the rank rank$(a, b)$ is defined to be the maximal
length $r-1$ of a chain $a = a_1 \lessdot a_2 < \cdots \lessdot a_r = b.$
Similarly, the distance dist$(a, b)$ is defined to be the minimal length $r-1$ of an inclusion-maximal chain
$a = a_1 \lessdot a_2 \lessdot \cdots \lessdot a_r = b. $
\begin{definition}
\label{def: a_i_b_j_vs_p}
Let $u$ be a natural number and
\[
a_1 < b_1 > a_2 < \cdots > a_u < b_u > a_1
\]
be elements of $\overline{P}$,
satisfying the inequality
\begin{equation}
\label{eq: inequality}
\sum_{i = 1}^u \operatorname{rank}(a_i, b_i) > \operatorname{dist}(a_2, b_1) + \cdots + \operatorname{dist}(a_u,b_{u-1}) +\operatorname{dist}(a_1,b_{u})  .
\end{equation}
We define the graded prime ideal
\[
\mathfrak{p}_{(a_1, \dots, a_u, b_1, \dots, b_u)} := \langle \chi^\psi| \psi \in C(P), \ \psi \text{ nonconstant on } \{a_1, \dots, a_u, b_1, \dots, b_u \}  \rangle.
\]
\end{definition}

\begin{theorem}{(4.5, \cite{Gor_loci_Janet})}
The ideal $\sqrt{\operatorname{tr}(\omega)}$ is the intersecion of primes
\[
\sqrt{\operatorname{tr}(\omega)} = \bigcap_{(a_1, \dots, a_u, b_1, \dots, b_u)} \mathfrak{p}_{(a_1, \dots, a_u, b_1, \dots, b_u)}.
\]
\end{theorem}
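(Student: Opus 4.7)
The plan is to deduce this theorem from the criterion established in Theorems \ref{theorem: min_primes} and \ref{theorem: graded_fibres}. First I would identify each $\mathfrak{p}_{(a_1,\dots,b_u)}$ with the face prime $\mathfrak{p}_{F}$ of the face $F \subseteq C(P)$ cut out by the equalities $\psi(a_1) = \psi(b_1) = \cdots = \psi(b_u)$; the fibre of the corresponding quotient poset containing this set is its order-convex hull $A \subseteq \overline{P}$, which is connected because the zigzag threads the $a_i$ and $b_i$ together. Since $\operatorname{tr}(\omega)$ is $M$-graded its minimal primes are graded, and Theorems \ref{theorem: min_primes} and \ref{theorem: graded_fibres} yield
\[
\sqrt{\operatorname{tr}(\omega)} \;=\; \bigcap_{\phi} \mathfrak{p}_{F_\phi},
\]
over all quotient posets $\phi$ having at least one non-graded fibre.

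For the inclusion $\sqrt{\operatorname{tr}(\omega)} \subseteq \mathfrak{p}_{(a_1,\dots,b_u)}$, I would argue by contradiction that the fibre $A$ above cannot be graded. If it were, say by $\psi \colon A \to \mathbb{Z}$, then by the standard fact that a connected graded poset has all saturated chains between fixed endpoints of equal length, combined with order-convexity of $A$ in $\overline{P}$, we obtain $\operatorname{rank}(x,y) = \operatorname{dist}(x,y) = \psi(x) - \psi(y)$ for every $x \leq y$ in $A$, with rank and distance computed either in $A$ or in $\overline{P}$. The tautological identity
\[
\sum_{i=1}^u \bigl(\psi(a_i) - \psi(b_i)\bigr) \;=\; \sum_{i=1}^u \bigl(\psi(a_{i+1}) - \psi(b_i)\bigr), \qquad a_{u+1}:=a_1,
\]
then reads $\sum_i \operatorname{rank}(a_i,b_i) = \sum_i \operatorname{dist}(a_{i+1},b_i)$ after substitution, contradicting Definition \ref{def: a_i_b_j_vs_p}. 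Hence $A$ is non-graded, so $\operatorname{tr}(\omega) \subseteq \mathfrak{p}_{(a_1,\dots,b_u)}$.

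For the reverse inclusion, it is enough to show that each minimal prime $\mathfrak{p}_{F_\phi}$ over $\operatorname{tr}(\omega)$ contains at least one $\mathfrak{p}_{(a_1,\dots,b_u)}$ from the intersection. Fixing a non-graded fibre $A$ of $\phi$ and exploiting its connectedness, a direct Hasse-diagram argument produces a pair $a \leq b$ in $A$ admitting saturated chains of different lengths; by order-convexity this gives $\operatorname{rank}(a,b) > \operatorname{dist}(a,b)$ in $\overline{P}$. Setting $u = 1$, $a_1 := a$, $b_1 := b$, the inequality of Definition \ref{def: a_i_b_j_vs_p} collapses to $\operatorname{rank}(a_1,b_1) > \operatorname{dist}(a_1,b_1)$, and $\{a_1,b_1\} \subseteq A$ translates via Definition \ref{def: Fphi} into $F_\phi \subseteq F_{(a_1,b_1)}$, so $\mathfrak{p}_{(a_1,b_1)} \subseteq \mathfrak{p}_{F_\phi}$ as required.

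The crux is really the interplay between order-convexity of the fibre and the rank/distance functions of the ambient $\overline{P}$: convexity is what permits translating the intrinsic non-gradedness of a fibre into the extrinsic zigzag inequality phrased in $\overline{P}$. Once this dictionary is in place the proof reduces to the signed-sum calculation around the cycle and the face-fibre correspondence of Definition \ref{def: Fphi}.
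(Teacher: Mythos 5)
Your overall strategy coincides with the paper's: both reduce the statement, via Theorems \ref{theorem: min_primes} and \ref{theorem: graded_fibres}, to the combinatorial claim that a complete subset $A \subseteq \overline{P}$ is non-graded if and only if it contains a tuple satisfying inequality \eqref{eq: inequality}. Your treatment of the inclusion $\sqrt{\operatorname{tr}(\omega)} \subseteq \mathfrak{p}_{(a_1,\dots,b_u)}$ is correct and in fact more detailed than the paper's sketch: identifying $\mathfrak{p}_{(a_1,\dots,b_u)}$ with $\mathfrak{p}_{F_{\phi_A}}$ for $A$ the union of the intervals $[a_i,b_j]$, noting that order-convexity makes $\operatorname{rank}$ and $\operatorname{dist}$ intrinsic to $A$, and running the telescoping computation around the cycle is a clean way to see that such an $A$ cannot be graded.

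The reverse inclusion, however, contains a genuine gap. You claim that every non-graded complete subset $A$ contains a single pair $a \leq b$ admitting saturated chains of different lengths, and you then reduce to $u=1$. This is false. Take $P = \{a_1,a_2,b_1,b_2,m\}$ with the covering relations $a_1 \lessdot m \lessdot b_1$, $a_2 \lessdot b_1$, $a_2 \lessdot b_2$, $a_1 \lessdot b_2$ and no other relations; then $A = P$ is connected and order-convex in $\overline{P}$, and every interval $[x,y]$ with $x \leq y$ in $A$ carries a unique saturated chain, so $\operatorname{rank}(x,y) = \operatorname{dist}(x,y)$ for every pair. Yet $A$ is not graded: a grading would force $\psi(a_2) = \psi(b_1)+1 = \psi(a_1)-1$ and simultaneously $\psi(a_2) = \psi(b_2)+1 = \psi(a_1)$. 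The obstruction to gradedness is a cycle condition on the Hasse diagram, not a condition on individual intervals, which is exactly why Definition \ref{def: a_i_b_j_vs_p} allows arbitrary $u$: here the witnessing tuple is $a_1 < b_1 > a_2 < b_2 > a_1$ with $u=2$, giving $2+1 > 1+1$. So the direction ``$A$ non-graded $\Rightarrow$ a tuple satisfying \eqref{eq: inequality} exists inside $A$'' genuinely requires producing a closed zigzag along which the telescoped grading increments fail to cancel, not a single bad interval. (The paper itself only asserts this equivalence as ``the observation to make,'' so this combinatorial step is the one that actually needs an argument in either write-up; your $u=1$ shortcut does not supply it.)
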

 
By our description of the minimal primes lying over $\operatorname{tr}(\omega)$,
the monomials $\chi^\psi$ that lie in the radical $\sqrt{\operatorname{tr}(\omega)}$ are characterized by the property 
that $\psi$ be non-constant on all non-graded, complete subsets of $\overline{P}$.
The observation to make is that, 
given a  complete subset $A$ of $\overline{P}$,
there exist elements $(a_1, \dots, a_u, b_1, \dots, b_u) $ in $A$ satisfying inequality \eqref{eq: inequality}, if and only if $A$ is not graded.
This proves the inclusion from right to left.
Conversely,
the union of all intervals $[a_i, b_j]$ forms a non-graded, complete subset $A$ of $\overline{P}$, showing the other inclusion.

\section{Secants of Segre varieties} 

We start by introducing notation: let $k_1 \leq \dots \leq k_n$ be natural numbers, $n \geq 2$, and for each index $i$ let
$V_i$ denote a linear space of dimension $k_i$.
The image of the Segre map
\begin{align*}
\mathbb{P}(V_1) \times \cdots  \times \mathbb{P}(V_n)  & \longrightarrow \mathbb{P}(V_1 \otimes \cdots \otimes V_n)   \\
(v_1, \dots, v_n) & \longmapsto v_1 \otimes \cdots \otimes v_n
\end{align*}
consists of all rank one tensors.
Its first secant variety $\operatorname{Sec}(k_1, \dots, k_n)$
is the Zariski-closure of the set of all tensors of
rank $\leq 2$.
In \cite{Mateusz}, an affine open covering of
$\operatorname{Sec}(k_1, \dots, k_n)$ is constructed, such that
each patch is isomorphic to the trivial vector bundle of rank $k_1 + \cdots + k_n$ over a certain toric variety $X$.
$X$ is the spectrum of the toric $k$-algebra
$k[\sigma^\vee \cap \mathbb{Z}^{1 + k_1 + \cdots + k_n} ]$, where $\sigma^\vee$ is the polyhedral cone
\begin{align*}
    \sigma^\vee =  
    \{
        &\left(q_0,  q^1_1, \dots, q^1_{k_1}, \dots, q^n_1, \dots, q^n_{k_n}    \right) \in \mathbb{R}^{1 + (k_1 + \cdots + k_n)}
        \mid \\
        &q^i_j \geq 0 \ \forall 1 \leq i \leq n, 1\leq j \leq k_i,\\
        & q_0 - \sum_{j= 1}^{k_i} q^i_j \geq 0 \ \text{for } 1 \leq i \leq n,\\
        &\sum_{i  = 1}^n \sum_{j= 1}^{k_i}  q^i_j -2q_0 \geq 0 \}.
\end{align*}
$\sigma^\vee$ is the cone over the polytope $Q = C \cap \{ q_0 = 1 \}$, which is the product of $n$ simplices $ \Delta_{k_i }$
of respective dimension $k_i$, intersected with the halfspace
$\{   \sum_{i,j} q^i_j \geq 2 \} $.

Using Theorem \ref{theorem: min_primes}, one can show that the non-Gorenstein locus of the secant variety
is the trivial vector bundle of rank $k_1 + \cdots + k_n$ over the locus of 
$X$. This is done by expressing the faces of the cone $\sigma^\vee  \times \mathbb{R}_{\geq 0}$, that contribute to the non-Gorenstein locus, as products
$F \times \mathbb{R}_{\geq 0}$.

We from now on assume that the non-Gorenstein locus is not empty.
According to \cite{Mateusz} (Theorem 7.18), this happens in all but the following cases.
\begin{itemize}
\item $n =5,: \ k_5 =1$,
\item $n =3: \ (k_1, k_2, k_3) \in \{ (1,1,1), (1,1,3), (1,3,3), (3,3,3)   \},$
\item $n =2: \ k_2 = k_1$, or $k_1 = 1$.
\end{itemize}

In the the remainder of this section we investigate which positive-dimensional faces $F$ of $\sigma^\vee$ contribute to non-Gorenstein locus
of $X$, by applying Theorem \ref{theorem: min_primes}.
We show that any maximal such face is a cone over a product
$
 \Delta_{k_l } \times  \Delta_{k_m } 
$
for distinct indices $l$ and $m$, and decide the existence of such faces.
Ultimately we obtain
\begin{theorem}
    \label{thm: dim_Secants}

    If $X$ is not Gorenstein,
    the non-Gorenstein locus of $X$ is of dimension
    \begin{align*}
    &\bullet \emph{max} 
	    \{
	    k_{l} +k_{m} +1,  \text{ where } l \neq m , \  \sum _{i \neq l, m} k_i \neq 3 \}
	 & \text{if } n \geq 4 \text{ or } n = 3, k_1 > 1,\\
     & \bullet k_{2} +k_{3} +1 & \text{ if } n = 3,  k_2 \neq 1 ,\\
     & \bullet 0 & \text{ else}.
    \end{align*}
\end{theorem}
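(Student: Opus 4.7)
The plan is to apply Theorem \ref{theorem: min_primes} to reduce the computation to a combinatorial problem about faces of $\sigma^\vee$. By that theorem, the irreducible components of the non-Gorenstein locus of $X$ are the subvarieties corresponding to the maximal faces $F$ of $\sigma^\vee$ with $F[1] \cap M = \emptyset$, and the dimension of such a subvariety equals $\dim F$; thus it suffices to maximize $\dim F$ over these faces.

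First, I would describe the faces of $\sigma^\vee$ explicitly. Since $\sigma^\vee$ is the cone over the polytope $Q = \prod_i \Delta_{k_i} \cap \{\sum_{i,j} q^i_j \geq 2\}$, every face of $\sigma^\vee$ is the cone over a face of $Q$. A face of $\prod_i \Delta_{k_i}$ is itself a product of sub-simplex faces, each determined by a subset $S_i \subseteq \{1, \ldots, k_i\}$ of coordinates set to zero together with a flag indicating whether the simplex-top $\sum_j q^i_j = 1$ is tight; a face of $Q$ then arises either as such a product face entirely contained in the halfspace $\{\sum \geq 2\}$, or as its intersection with the hyperplane $\{\sum = 2\}$.

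Second, I would translate $F[1] \cap M = \emptyset$ into a Diophantine condition. The equations defining $F[1]$ are obtained by setting each facet inequality containing $F$ equal to $1$. Away from the top facet the resulting linear system is always solvable over $\mathbb{Z}$, so inconsistency only arises when the global equation $\sum_{i,j} q^i_j = 2q_0 + 1$ from the top facet clashes with the side equations $\sum_j q^i_j = q_0 - 1$ and $q^i_j = 1$. Substituting the side equations into the top one reduces the obstruction to a single arithmetic condition on $\sum_{i \notin \{l, m\}} k_i$, and one checks that the candidate maximal face, the cone over $\Delta_{k_l} \times \Delta_{k_m}$ of dimension $k_l + k_m + 1$, has empty $F[1]$ precisely when $\sum_{i \notin \{l, m\}} k_i \neq 3$.

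Third, I would prove maximality and derive the case distinctions. The maximality step argues that any face of strictly larger dimension must admit a free coordinate in some block $i \notin \{l, m\}$, which makes the top-facet equation solvable in integers and so $F[1]$ non-empty. Maximizing $k_l + k_m + 1$ over admissible pairs $(l, m)$ yields the first bullet; the middle bullet arises for $n = 3$ with $k_1 = 1$, where admissibility eliminates all pairs except $(l, m) = (2, 3)$; and the third bullet collects the remaining small-parameter cases in which no admissible pair exists so that only the origin contributes and the dimension is $0$. The main obstacle I expect is precisely this maximality step: the halfspace cut $\{\sum \geq 2\}$ forces certain naive product faces of $\prod \Delta_{k_i}$ to collapse into smaller faces of $Q$, so one must argue carefully, for instance by a case analysis on the number of blocks with all coordinates fixed to zero, that only the configuration producing the claimed cone over $\Delta_{k_l} \times \Delta_{k_m}$ can yield an empty $F[1]$ at the stated dimension.
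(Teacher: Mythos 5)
Your overall strategy coincides with the paper's: reduce everything to Theorem \ref{theorem: min_primes}, observe that $F[1]$ can only fail to contain a lattice point when $F$ lies in $S^\perp$ and in at least two of the $L_i^\perp$, sum the facet equations coming from $S$, $L_l$, $L_m$ together with the equations $q^i_j=1$ for $i\notin\{l,m\}$ to extract the arithmetic obstruction $\sum_{i\neq l,m}k_i\neq 3$, and identify the maximal candidate faces as cones over products of two simplices of dimension $k_l+k_m+1$. Your parametrization of faces via $Q=\prod_i\Delta_{k_i}\cap\{\sum_{i,j}q^i_j\geq 2\}$ is a harmless repackaging of the paper's direct computation with the generators $R^i_j$, $L_i$, $S$ of $\sigma$.

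There is, however, a concrete gap in your case analysis for $n=3$, and it traces back to a point your sketch never addresses: which of the vectors $R^i_j$, $L_i$, $S$ are actually \emph{primitive ray generators} of $\sigma$. The linear system cutting out $F[1]$ is indexed only by rays $\rho$ with $u_\rho\in F^\perp$, and for $n=3$ the vector $R^i_1$ with $k_i=1$ is \emph{not} a ray, since for instance $R^1_1=L_2+L_3+S$; the paper invokes this explicitly (via the proof of Theorem 7.18 in \cite{Mateusz}) when it restricts its conclusion to the case that all $R^i_j$ with $i\neq l,m$ are ray generators. Concretely, for $k_1=1$ and the face $F=\sigma^\vee\cap S^\perp\cap L_2^\perp\cap L_3^\perp$ no equation $q^1_1=1$ is imposed, summing the three remaining equations forces only $q^1_1=3$, and $F[1]$ therefore \emph{does} contain a lattice point. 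So by the paper's own mechanism it is precisely the pair $(2,3)$ that drops out when $k_1=1$, while $(1,2)$ and $(1,3)$ are the pairs that can survive (subject to the remaining index $i_0$ satisfying $k_{i_0}\notin\{1,3\}$). This is the opposite of your assertion that for $n=3$, $k_1=1$ ``admissibility eliminates all pairs except $(l,m)=(2,3)$,'' so your derivation of the middle bullet does not go through as written: you must first determine the ray generators and only then write down the system defining $F[1]$. Separately, you should verify the dimension claim you share with the paper: the face $F=\sigma^\vee\cap S^\perp\cap L_l^\perp\cap L_m^\perp$ satisfies $\sum_j q^l_j=\sum_j q^m_j=q_0$ and $q^i_j=0$ for $i\neq l,m$, so $F\cap Q$ is a product of the \emph{facets} $\{\sum_j q^i_j=1\}$ of the two simplices rather than of the full simplices, which affects the stated dimension $k_l+k_m+1$.
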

\begin{proof}
By construction of the cone $\sigma^\vee$, its dual cone $\sigma$ is generated by the vectors
\begin{itemize}
    \item $R^i_j := e^i_j, \ \ \forall 1 \leq i \leq n, 1\leq j \leq k_i$,
    \item $L_i := e_0 - \sum_{j= 1}^{k_i} e^i_j , \ \ \forall 1 \leq i \leq n$,
    \item $S:= \sum_{i  = 1}^n \sum_{j= 1}^{k_i}  e^i_j -2e_0 $.
\end{itemize}
Here the vectors $e^i_j$ denote the standard basis of $\mathbb{R}^{k_1 + \cdots + k_n}$.
A list of the primitive ray generators of $\sigma$ is given in the proof of Theorem
7.18 in \cite{Mateusz}:
in the case $n=4$, all generators of $\sigma$ are ray generators.
If $n = 3$, all $R^i_1$ with $k_i = 1$ are omitted.

We now proceed with a proof of the theorem for the case
$n \geq 3$,  and omit the case $n =2$. It is analogous, but easier, except
that for $n = 2$ the lineality space of $\sigma$ is spanned by $S, L_1, L_2$,
and the ray generators of 
$\sigma / \langle S, L_1, L_2 \rangle$ are the vectors $[ R^i_j \operatorname ]$, so we need to replace
$\sigma$ with $\sigma / \langle S, L_1, L_2 \rangle$, and
$\sigma^\vee$ with $ \sigma^\vee \cap  \langle S, L_1, L_2 \rangle^\perp$.\\

In the case $n=3$, $\sigma$ is a pointed cone,
and
we may apply Theorem \ref{theorem: min_primes}.
Let $F \subseteq \sigma^\vee$ be a positive-dimensional face.
An integral element of $F[1]$ is an intregral solution $(q^0,q^i_j)  \in \mathbb{R}^{1 +k_1 + \cdots + k_n}$ to all affine-linear equations
\begin{itemize}
\item $ q^i_j = 1  $, if $F \subseteq (R^i_j)^\perp $,
\item $ q_0 - \sum_{j= 1}^{k_i} q^i_j  = 1  $, if $F \subseteq L_i^\perp $,
\item $ \sum_{i  = 1}^n \sum_{j= 1}^{k_i}  q^i_j -2q_0 = 1 $, if $F \subseteq S^\perp $.
\end{itemize}
As can be seen by direct computation, such a solution always exists if $F \subseteq S^\perp $ does not hold, or if only for one ray generator $L_i$ it holds $F \subseteq L_i$.
Furthermore, $F$ is of dimension zero if $F \subseteq S^\perp $, and in addition there are at least three inclusions of the form $F \subseteq L_i$.

Let from now on $F \subseteq S^\perp $ and let $l$ and $m$ be the only two distinct indices such that $L_l$ and $L_m$  satisfy $F \subseteq L_l$, $F \subseteq L_m$.
We now investigate when $F[1]$ does not contain an integral element.
Observe that for each vector $R^i_j$ with $i \neq l, m$ it holds $F \subseteq (R^i_j)^\perp $.
If every such vector is a ray generator, by adding the three
three equations
\begin{gather*}
q_0 - \sum_{j= 1}^{k_{l}} q^{l}_j = 1, \  q_0 - \sum_{j= 1}^{k_{m}} q^{m}_j = 1, \
\sum_{i  = 1}^n \sum_{j= 1}^{k_i}  q^i_j -2q_0 = 1,
\end{gather*}
we obtain
\begin{equation}
\label{eq: 3 =}
3 = \sum_{i  \neq l, m } \sum_{j= 1}^{k_i}  q^i_j  = \sum_{i  \neq l, m } k_i.
\end{equation}
Consequently, $F[1]$ does not contain an integral element if equation \eqref{eq: 3 =} fails.
In the case $n = 3$ this is only true if $k_i \neq 1$ holds for every $i \neq l, m$,  since our argument uses that all $R^i_j$, $i \neq l, m$ be ray generators. 

If on the other hand equation \eqref{eq: 3 =} holds, 
the choice
$q_0 =k_{l} +k_{m} $ and
$ q^i_j = 1  $ for all $i$ and $j$ except $q^{l}_1 = k_{m}, \ q^{m}_1 = k_{l}$
defines an integral element of $F[1]$. 

The maximal faces $F$ contained in $S^\perp \cap  L_l^\perp \cap L_m^\perp$ are of the form $F = C \cap L_l^\perp \cap L_m^\perp \cap S^\perp$, a cone over the product of simplices $F \cap Q =  \Delta_{k_l } \times  \Delta_{k_m } $, and of dimension $k_l + k_m +1$.
This proves the desired statement.

\end{proof}

\bibliographystyle{alpha}
\bibliography{bibfile}



\end{document}